\theoremstyle{plain}
\newtheorem{thm}{Theorem}
\theoremstyle{remark}
\newtheorem{rem}{Remark}
\newcommand{\ml}{\mathcal}
\newcommand{\mb}{\mathbb}
\newcommand{\bs}{\boldsymbol}
\begin{document}

\title{Avoiding unwanted results in locally 
linear embedding: A new understanding of regularization}
\author{Liren Lin\\ \\
Department of Applied Mathematics,\\
National Sun Yat-sen University, Taiwan\\
\tt{lirenlin2017@gmail.com}}
\date{}
\maketitle

\begin{abstract}
We demonstrate that locally linear embedding (LLE) 
inherently admits some unwanted results when no regularization 
is used, even for cases in which regularization is not 
supposed to be needed in the original algorithm. The existence 
of one special type of result, which we call ``projection 
pattern'', is mathematically proved in the situation that 
an exact local linear relation is achieved in each neighborhood of 
the data. These special patterns as well as some other bizarre 
results that may occur in more general situations
are shown by numerical examples on the Swiss roll with a hole 
embedded in a high dimensional space. It is observed that 
all these bad results can be effectively prevented 
by using regularization.
\end{abstract}

\section{Introduction}

Let $\ml{X}=\{x_i\}_{i=1}^N$ be a collection of points 
in some high dimensional space $\mb{R}^D$. The general goal of 
manifold learning (or nonlinear dimensionality reduction) is to 
find for $\ml{X}$ a representation $\ml{Y}=\{y_i\}_{i=1}^N$ in 
some lower dimensional $\mb{R}^d$, under the assumption 
that $\ml{X}$ lies on some unknown submanifold of 
$\mb{R}^D$. 

Locally linear embedding (LLE) \cite{lle}, due to its 
simplicity in idea as well as efficiency in implementation, is a 
popular manifold learning method, which has been 
studied a lot and has many variant and modified 
versions \cite{hlle2003,de2003supervised,kouro2005,
chang2006robust, mlle2007,chenliu2011,wu2018,wang2015bearing}. 
The goal of this paper is to point out a fundamental problem,
which seems to have not been addressed yet, 
and then propose a simple solution.
Precisely, we will demonstrate 
that LLE inherently admits some unwanted results when no 
regularization is used, even for cases in which regularization 
is not supposed to be needed in the original algorithm.
And the solution is just to use regularization in any case.

The existence of one special type of unwanted results, 
which we call ``projection patterns'', will be mathematically 
proved in the situation that an exact local linear relation 
is achieved in each neighborhood of the dataset $\ml X$. 
Such projection patterns, as indicated by their name, 
are basically direct projections of $\ml{X}$ from $\mb{R}^D$ 
onto a $d$-dimensional hyperplane, ignoring the  
geometry of the data totally.
Due to the using of regularization, they do not appear 
for most artificial datasets, which reside in some low 
dimensional $\mb{R}^D$ (mostly $\mb{R}^3$).
The problem is that for general data in a high 
dimensional $\mb{R}^D$, the 
regularization is supposed to be unnecessary and not employed. 
We will show by numerical examples that this practice is risky.
The idea is simple:
performing some embeddings of 
the Swiss roll with a hole into a high dimensional $\mb{R}^D$ 
such that 
regularization is not used in the original LLE algorithm, and 
then comparing the results to those with regularization
being used. It turns out that, if regularization is not used, 
the projection phenomenon always occurs as predicted when an isometric embedding is applied, while if some further perturbation is added 
to the embedding, more bizarre results may appear.
By contrast, with regularization, all the bad results are
effectively prevented.

\section{Projection patterns in LLE}\label{sec:lle}

Let us begin with a quick review of the LLE procedure. As in the introduction, 
the dataset $\ml X=\{x_{i}\}_{i=1}^{N}$ is a subset of 
$\mb R^{D}$ and we want to find for it a representation 
$\ml Y=\{y_{i}\}_{i=1}^{N}$ in some lower dimensional 
$\mb R^{d}$. 
\begin{enumerate}[align=left]
\item[Step 1.] For each $i=1,\ldots,N$, let 
$\ml U_{i}=\{x_{i_{1}},\ldots,x_{i_{k}}\}
\subset\ml X\setminus\{x_{i}\}$ be a $k$-nearest 
neighborhood of $x_{i}$. (For simplicity we consider
$k$ to be fixed for all $i$.)
\item[Step 2.] Set 
$w^{(i)}=(w_1^{(i)},\ldots,w_k^{(i)})\in\mb{R}^k$ to be 
a solution of the problem 
\begin{equation}
\underset{(w_{1},\ldots,w_{k})\in\mb R^{k}}{\mbox{argmin}}
\|x_{i}-\sum_{j=1}^{k}w_{j}x_{i_{j}}\|^{2}\quad\mbox{s.t.}
\quad\sum_{j=1}^{k}w_{j}=1.\tag{P1}\label{eq:p1}
\end{equation}
\item[Step 3.] With $w^{(i)}$ given from Step 2, set $\ml Y$ to be
a solution of the problem
\begin{equation}
\underset{\{y_{i}\}_{i=1}^{N}\subset\mb R^{d}}{\mbox{argmin}}
\sum_{i=1}^{N}\|y_{i}-\sum_{j=1}^{k}w_{j}^{(i)}y_{i_{j}}\|^{2}
\quad\mbox{s.t.}\quad YY^{T}=I,\tag{P2}\label{eq:p2}
\end{equation}
where $Y$ denotes the matrix 
$\left[y_{1}\ \cdots\ y_{N}\right]\in\mb R^{d\times N}$,
and $I$ is the identity matrix.
\end{enumerate}
For convenience, in the following we reserve the index $i$ 
to be the one which runs from $1$ to $N$, and $i_{1},\ldots,i_{k}$
are the indices for the corresponding $k$-nearest points being 
chosen in Step 1. 

The following remark suggests a possible problem of the LLE 
algorithm, which turns out to be what happens to the 
``unwanted results'' that will be discussed later.

\begin{rem}\label{rem:tei} 
Step 3 is stated in accordance with the original
idea. It is not what is exactly executed in the algorithm.
To be precise, write
\begin{align*}
\sum_{i=1}^{N}\|y_{i}-\sum_{j=1}^{k}w_{j}^{(i)}y_{i_{j}}\|^{2}
=\|(I-W)Y^T\|^2,
\end{align*}
where $W=(W_{ij})$ is the $N\times N$ matrix defined by 
\begin{align*}
W_{is} = \bigg\{
\begin{array}{cl}
w_j^{(i)} &  \mbox{if }\ s= i_j\ (j=1,\ldots,k)\\[3pt]
0 &  \mbox{if }\ s\notin \{i_{1},\ldots,i_{k}\},
\end{array}
\end{align*}
and $\|\cdot\|$ for matrices denotes the Frobenius norm.
Then, it is known that solutions of \eqref{eq:p2} are given by 
\begin{equation}
Y^{T}=\left[g_{1}\ \cdots\ g_{d}\right],\label{eq:lley}
\end{equation}
where $g_{1},\ldots,g_{d}$ are any selection of 
orthonormal eigenvectors of 
$(I-W)^T(I-W)$ with corresponding eigenvalues 
$0=\lambda_1\le \lambda_2\le \ldots\le\lambda_d$.
However, in this way one of the $g_{j}$'s 
might be $\frac{1}{\sqrt{N}}\bs 1_{N}$, corresponding to the 
eigenvalue $0$. Here $\bs{1}_N\in \mb R^N$ denotes the vector 
whose components all equal $1$.
This eigenvector is redundant for our 
purpose, since the dimension reduced data $\ml Y$ given by 
\eqref{eq:lley} is trivially isometric to that given by 
$[g_2\ g_3\ \cdots\ g_d]$. Therefore, to find
an effective $d$-dimensional representation for $\ml X$, we 
would like to exclude $\frac{1}{\sqrt{N}}\bs 1_N$ from our 
selection of eigenvectors. In the LLE algorithm, what is
executed is to find out the first $d+1$ eigenvectors, 
and set $g_1,\ldots,g_d$ to be the second to the $(d+1)$-th ones 
respectively, in the anticipation that the first eigenvector 
should be $\frac{1}{\sqrt{N}}\bs{1}_N$. However, note that this 
approach may break down when the multiplicity of the 
eigenvalue $0$ is greater than one, hence there
are more than one eigenvectors corresponding to it. 
In this situation, it is 
not guaranteed that the first eigenvector being found by computer
is $\frac{1}{\sqrt{N}}\bs 1_{N}$. 
Indeed, it is not necessary that 
any one of the eigenvectors been found must be 
$\frac{1}{\sqrt{N}}\bs 1_{N}$. 
\end{rem}

Now we turn to the resolution of \eqref{eq:p1}. It can be 
rewritten as 
\[
\underset{w\in\mb R^{k}}{\mbox{argmin}}\|Z_{i}w\|^{2}\quad
\mbox{s.t.}\quad\bs 1_{k}^{T}w=1,
\]
where
\[
Z_{i}=\begin{bmatrix}x_{i_{1}}-x_{i} & \cdots & x_{i_{k}}-x_{i}\end{bmatrix}\in\mb R^{D\times k}.
\]
Let $C_{i}=Z_{i}^{T}Z_{i}$, then 
$\|Z_i w\|^2 = w^T Z_i^T Z_i w = w^T C_i w$.
By the method of Lagrange multiplier, any minimizer $w$ of \eqref{eq:p1}
satisfies
\begin{equation}\label{eq:lm}
\left\{ \begin{aligned} & C_i w=\lambda\bs 1_{k}\\
 & \bs 1_{k}^{T}w=1
\end{aligned}
\right.
\end{equation}
for some $\lambda\in\mb R$. If $C_{i}$
is invertible, \eqref{eq:lm} has a unique solution 
\[
w^{(i)}=\frac{C_{i}^{-1}\bs 1_{k}}{\bs 1_{k}^{T}C_{i}^{-1}\bs 1_{k}}.
\]
On the other hand, if $C_{i}$ is singular, \eqref{eq:lm} may have
multiple solutions. In this situation it is suggested that we 
add a small regularization term $\epsilon I$ to $C_{i}$, and set 
\begin{align}\label{eq:rew}
w^{(i)}=w^{(i)}(\epsilon)=\frac{(C_{i}+\epsilon I)^{-1}\bs 1_{k}}
{\bs 1_{k}^{T}(C_{i}+\epsilon I)^{-1}\bs 1_{k}}.
\end{align}

For efficiency, in the LLE algorithm whether the regularization 
term is used or not is not precisely determined by the 
invertibility or not of $C_i$, but by the following rule: 
used if $k>D$, and not used if $D\ge k$. Note that this rule is only 
a convenient one but not in exact accordance with the original
idea. Precisely, if $k>D$, then it's true that $C_i$ must be singular, 
but $D\ge k$ does not guarantee that $C_i$ is non-singular. 
Nevertheless, as we will advocate using regularization 
(namely the formula \eqref{eq:rew})
no matter $C_i$ is singular or not, we do not care about 
this problem.

A common perception of \eqref{eq:rew} is that it provides
a stable way to solve \eqref{eq:lm}, at the cost of a small amount of error.
In fact, 
\[
w^{(i)}(0^{+}):=\lim_{\epsilon\to0^{+}}w^{(i)}(\epsilon)
\]
exists and is an exact solution of \eqref{eq:lm}. 
In this thread, it seems natural to 
expect better performance of LLE if $w^{(i)}(\epsilon)$ is
replaced by $w^{(i)}(0^{+})$. However, that is not the case.
In Figure \ref{fig:1},
we give some numerical examples 
on the Swiss roll with a hole for 
different values of $\epsilon$, and we see that 
when $\epsilon$ is too small, the resulting $\ml Y$ becomes 
far from what it ought to be. Actually, when $\epsilon$ approaches zero, 
we see that $\ml Y$ converges to a ``projection pattern''. 
That is, it looks like $\ml Y$ is simply some 
projection of the Swiss roll onto the plane, ignoring the 
rolled-up nature of the original data.
We are going to explain that this phenomenon is not a 
coincidence, and is not due to any instability
problem in solving \eqref{eq:lm}. In fact, such projection 
mappings are inherently allowed in the LLE procedure 
when no regularization is used.
\begin{figure}
\includegraphics[width=1\textwidth]{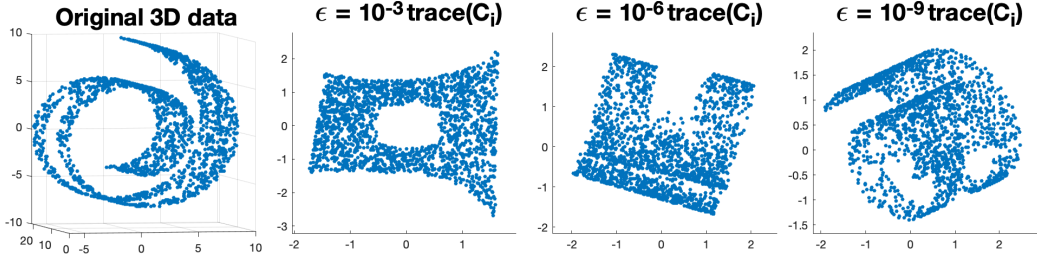}
\caption{Results of applying LLE to the Swiss roll with a hole 
with different sizes of $\epsilon$. Notice that the rightmost 
image is a 2D one.
\label{fig:1}}
\end{figure}

To explain the phenomenon, we formulate precisely two assumptions:
\begin{enumerate}[label=(A\arabic{enumi}), ref = (A\arabic{enumi})]
\item $x_{i}-\sum_{j=1}^{k}w_{j}^{(i)}x_{i_{j}}=0$ for all $i=1,\ldots,N$. \label{a1}
\item $\mbox{rank}(X)\ge d$, where 
$X:=\begin{bmatrix}x_{1} & \cdots & x_{N}\end{bmatrix}\in\mb R^{D\times N}$. \label{a2}
\end{enumerate}
Assumption \ref{a1} is the same as saying that zero is achieved as the minimum in  
\eqref{eq:p1} for all $i$. Note that this is the normal situation for cases with 
$k>D$ -- the same cases for which regularization is employed 
in the LLE algorithm. As for \ref{a2}, it 
is a natural assumption which holds in all cases of interest. 
To see it, suppose $\mbox{rank}(X)=r<d$, then some $r$ columns from $X$
span all the others, and hence $\ml X$ lies entirely on some $r$
dimensional subspace. In this case there is no point to pursue an embedding
of $\ml X$ in $\mb R^{d}$.

Recall that 
$\mbox{rank}(XX^{T})=\mbox{rank}(X)$,
and hence \ref{a2} implies $XX^{T}$ has at least $d$ nonzero
eigenvalues (counting multiplicity). Moreover, since $XX^{T}$ is 
positive semi-definite, all of its nonzero eigenvalues are positive. 
With these in mind, we can now state our key observation.
\begin{thm}
\label{thm:m1} Assume \ref{a1} and \ref{a2}. 
Let $u_{1},\ldots,u_{d}\in\mb R^{D}$ be any collection of 
orthonormal eigenvectors of $XX^{T}$ whose corresponding
eigenvalues, denoted $\lambda_{1},\ldots,\lambda_{d}$, are all positive. 
Then $\ml Y:=\{Ax_{i}\}_{i=1}^{N}$
is a solution of \eqref{eq:p2}, where $A\in\mb{R}^{d\times D}$
is given by
\begin{equation}
A=\begin{bmatrix}\lambda_{1}^{-1/2}\\
 & \ddots\\
 &  & \lambda_{d}^{-1/2}
\end{bmatrix}\begin{bmatrix}u_{1}^{T}\\
\vdots\\
u_{d}^{T}
\end{bmatrix}.\label{eq:ad}
\end{equation}
\end{thm}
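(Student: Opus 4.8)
The plan is to verify directly that $\ml Y=\{Ax_i\}_{i=1}^N$ is feasible for \eqref{eq:p2}, that its objective value is zero, and that zero is the minimum — the latter being automatic since the objective of \eqref{eq:p2} is a sum of squared norms and hence nonnegative. Write $Y=AX$, so that $Y^T = X^T A^T$. First I would check the constraint $YY^T=I$. Expanding, $YY^T = AXX^T A^T$, and with $A = \Lambda^{-1/2}U^T$ where $\Lambda=\mathrm{diag}(\lambda_1,\ldots,\lambda_d)$ and $U=[u_1\ \cdots\ u_d]$, one gets $YY^T = \Lambda^{-1/2}U^T(XX^T)U\Lambda^{-1/2}$. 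Since the $u_j$ are orthonormal eigenvectors of $XX^T$ with eigenvalues $\lambda_j$, we have $(XX^T)U = U\Lambda$, so $U^T(XX^T)U = U^TU\Lambda = \Lambda$ (using orthonormality), and therefore $YY^T = \Lambda^{-1/2}\Lambda\Lambda^{-1/2} = I$. This uses \ref{a2} exactly where it is needed: the $\lambda_j$ are positive, so $\Lambda^{-1/2}$ makes sense.

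Next I would show the objective vanishes. Using the matrix reformulation from Remark \ref{rem:tei}, the objective of \eqref{eq:p2} equals $\|(I-W)Y^T\|^2$ (Frobenius norm). Now $Y^T = X^T A^T$, so $(I-W)Y^T = (I-W)X^T A^T = \big((I-W)X^T\big)A^T$. The key point is that assumption \ref{a1}, namely $x_i = \sum_j w_j^{(i)} x_{i_j}$ for every $i$, says exactly that each row of $X^T$ (equivalently each column of $X$, i.e.\ each coordinate function of the data) satisfies the linear relation encoded by $W$; in matrix form this is precisely $(I-W)X^T = 0$. Hence $(I-W)Y^T = 0\cdot A^T = 0$, so the objective is $0$, which is the global minimum. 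Combining feasibility with optimality gives the claim.

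The one step that needs a little care — and which I expect to be the main (though still modest) obstacle — is the clean translation of \ref{a1} into the matrix identity $(I-W)X^T=0$, making sure the indexing in the definition of $W$ matches up. Concretely, the $i$-th row of $WX^T$ has entries $\sum_s W_{is}(X^T)_{s\ell} = \sum_{j=1}^k w_j^{(i)} x_{i_j,\ell}$ for each coordinate $\ell$, which is the $\ell$-th coordinate of $\sum_j w_j^{(i)} x_{i_j}$; by \ref{a1} this equals the $\ell$-th coordinate of $x_i$, i.e.\ $(X^T)_{i\ell}$. So row $i$ of $WX^T$ equals row $i$ of $X^T$, giving $(I-W)X^T=0$. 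Everything else is the routine linear-algebra bookkeeping sketched above, and no appeal to uniqueness of solutions of \eqref{eq:p2} is needed — we only claim $\ml Y$ \emph{is} a solution, not the only one, which is consistent with the discussion in Remark \ref{rem:tei} about multiplicity of the eigenvalue $0$.
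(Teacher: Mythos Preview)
Your proposal is correct and follows essentially the same approach as the paper: verify that $Y=AX$ satisfies the constraint $YY^T=I$ via $A(XX^T)A^T=I$, and that the objective vanishes because the linear map $A$ preserves the local linear relations in \ref{a1}. Your use of the matrix identity $(I-W)X^T=0$ is just an explicit repackaging of the paper's pointwise linearity argument $Ax_i-\sum_j w_j^{(i)}Ax_{i_j}=0$; the only minor quibble is that \ref{a2} is what guarantees the \emph{existence} of $d$ eigenvectors with positive eigenvalues (so the theorem is not vacuous), rather than the well-definedness of $\Lambda^{-1/2}$, which is already built into the hypothesis on the $\lambda_j$.
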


\begin{proof}
The local linear relations in \ref{a1} 
are preserved by applying $A$, that is 
\[
Ax_{i}-\sum_{j=1}^{k}w_{j}^{(i)}Ax_{i_{j}}=0\quad\forall\,i.
\]
As a consequence, by setting $\ml Y=\{y_{i}\}_{i=1}^{N}=\{Ax_{i}\}_{i=1}^{N}$,
we have
\[
\sum_{i=1}^{N}\|y_{i}-\sum_{j=1}^{k}w_{j}^{(i)}y_{i_{j}}\|^{2}=0,
\]
and hence $\{y_{i}\}_{i=1}^{N}$ is a minimizer of the cost function in 
Problem \eqref{eq:p2}. It remains to show that the constraint 
$YY^{T}=I$ is also satisfied. Note that $Y=AX$, 
and hence the constraint is
\[
AXX^{T}A^{T}=I.
\]
The validity of this equality is easy to check from the definition
\eqref{eq:ad} of $A$, and the fact that 
$XX^{T}u_{\ell}=\lambda_{\ell}u_{\ell}$
for $\ell=1,\ldots,d$.
\end{proof}
Now, what does $\ml Y = \{Ax_{i}\}_{i=1}^{N}$ in Theorem 
\ref{thm:m1} look like? Note that
\begin{equation}
x\mapsto\sum_{\ell=1}^{d}(x\cdot u_{\ell})u_{\ell}\label{eq:op}
\end{equation}
is the orthogonal projection of $x\in\mb R^{D}$ onto the 
$d$-dimensional subspace $\mbox{span}(u_{1},\ldots,u_{d})$. 
By endowing this subspace with its own coordinate system with respect to
$u_{1},\ldots,u_{d}$, we can regard the projection as a mapping from
$\mb R^{D}$ into $\mb R^{d}$, which is expressed by the coefficients
in \eqref{eq:op}: 
\[
x\mapsto\begin{bmatrix}x\cdot u_{1}\\
\vdots\\
x\cdot u_{d}
\end{bmatrix}=\begin{bmatrix}u_{1}^{T}\\
\vdots\\
u_{d}^{T}
\end{bmatrix}x.
\]
This is the first half of the action of $A$. The second half 
is a further multiplication of the diagonal matrix 
$\mbox{diag}(\lambda_{1}^{-1/2},\ldots,\lambda_{d}^{-1/2})$,
which is nothing but performing some rescalings of the 
coordinates. In summary, $x\mapsto Ax$ is an 
orthogonal projection from $\mb R^{D}$ to $\mb R^{d}$ 
followed by some rescalings of coordinates in $\mb R^{d}$.
This explains the projection phenomenon observed. 

\begin{rem}\label{rem:cc}
What Theorem \ref{thm:m1} tells us is that, when 
\ref{a1} and \ref{a2} hold, some projection patterns are
solutions of \eqref{eq:p1}, and hence are candidates for
being the result of LLE. Logically speaking, it does not preclude 
the possiblity that there are other kinds of results (which we do 
not know). 
\end{rem}

\section{Numerical examples for high dimensional data}\label{sec:num}

The projection phenomenon does not 
pertain exclusively to cases with $k>D$. 
For one thing, even if $D\ge k$, if $\ml{X}$ lies on some 
$D'$-dimensional hyperplane with $D'<k$, 
then \ref{a1} still holds in general (we take 
\ref{a2} for granted and will no longer mention it), and 
Theorem \ref{thm:m1} applies.
Of course, such a degenerate case is of little interest. 
But for another, it is conceivable that 
the projection effect may still have its influence
when \ref{a1} is only approximately true while to a high degree.
This statement can partly be supported by the third and fourth images
in Figure \ref{fig:1}, where $\epsilon$
is very small and \ref{a1} is almost true.
However, those images correspond to the situation $k>D=3$. 

To acquire some direct evidences about what could 
happen to high dimensional data, 
we have performed several experiments in which 
the Swiss roll with a hole was first embedded in a high dimensional 
$\mb{R}^D$, and then applied LLE to it with $k$ chosen 
to be smaller than $D$.
For our purpose, we only considered isometric embeddings 
and perturbed isometric embeddings
so that the embedded datasets are basically the same 
Swiss roll with a hole.
It turns out that for isometrically embedded data 
(which lie entirely on some three dimensional 
hyperplane in $\mb R^D$), the projection phenomenon is obvious 
(see however Remark \ref{c1} at the end of this section).
Somewhat surprising to us is that when rather small 
perturbations are added to the isometric embeddings, the outcomes 
become quite unpredictable but not merely perturbations of 
projection patterns. 
It is observed that all these unwanted results -- projection
patterns and others -- are associated with the problem mentioned
in Remark \ref{rem:tei}. That is, the matrix $(I-W)^T(I-W)$
corresponding to them is very singular.
And in any case, by using regularization we see this problem can be 
effectively avoided, and considerably improved results can be obtained.
See Figure \ref{fig:2} for some examples, the details of which  
are given in the next paragraph. 
\begin{figure}
\includegraphics[width=\textwidth]{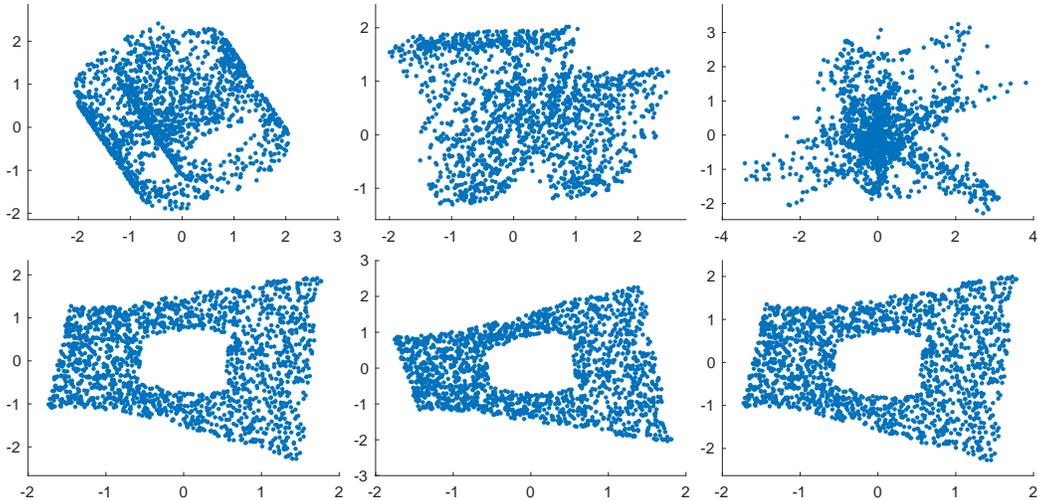}
\caption{Examples of applying LLE to the Swiss roll with a hole
embedded in dimensional spaces. From left to right, 
embeddings $E_1,E_2$ and $E_3$ are applied respectively (see descriptions in the text). 
Top row: results of the original LLE; Bottom row: corresponding results with regularization.
\label{fig:2}}
\end{figure}

Let $\ml X =\{x_i\}$ represents a Swiss roll with a hole dataset 
in $\mb{R}^3$. 
Figure \ref{fig:2} shows, from left to right, the results of 
applying LLE to 
three different datasets $E_1\ml X,E_2\ml X$ and $E_3\ml X$ 
with $k=12$.
The top row are results of the original LLE (no regularization), 
and the bottom
row are corresponding results for which the regularized weight 
vector \eqref{eq:rew}  is used for each $i$, with 
$\epsilon = 10^{-3}\mbox{trace}(C_i)$.
$E_i$, $i=1,2,3$, are given as follows:
\begin{itemize}
\item $E_1$ is some randomly generated $18\times 3$ matrix whose 
columns form an orthonormal set of vectors. 
This gives rise to a linear isometric mapping from $\mb R^3$ into 
$\mb R^{18}$.
\item $E_2$ is the mapping from $\mb R^3$ to $\mb R^{19}$ obtained 
by perturbing $E_1$ in an extra dimension:
\begin{align*}
E_2 x = (E_1 x, 0.1\sin (\sum_{j=1}^{18}(E_1 x)_j)),
\end{align*}
where $(E_1 x)_j$ denotes the $j$-th component of $E_1 x$.
\item  $E_3:\mb R^3\to\mb R^{18}$ is another perturbation of $E_1$ given by
\begin{align*}
E_3 x = E_1 x + 0.1(\sin((E_1 x)_1),\sin((E_1x)_2),\ldots,\sin((E_1x)_{18})).
\end{align*}
\end{itemize}
Here are some remarks:
\begin{enumerate}[label=(\alph{enumi}), ref = (\alph{enumi})]
\item For isometrically embedded data, the results without
regularization are not always typical projection patterns 
as the left top image in Figure \ref{fig:2}. Sometimes additional 
deformation or distortion is also present (see Figure \ref{fig:3}). 
We do not know if it reflects the fact that there are 
other possible results than 
projection patterns (cf. Remark \ref{rem:cc}), or is simply 
the cause of errors in numerical simulations.\label{c1}
\begin{figure}
\includegraphics[width=\textwidth]{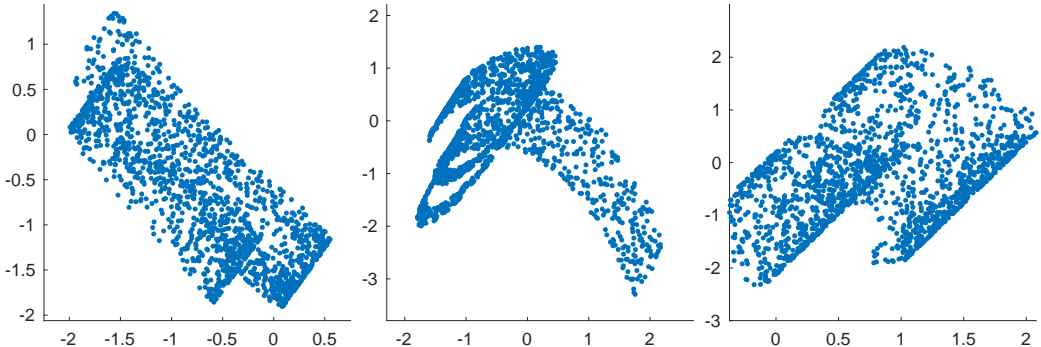}
\caption{Some ``non-typical'' projection patterns 
obtained by applying LLE to the Swiss roll with a hole 
isometrically embedded in a high dimensional space. 
\label{fig:3}}
\end{figure}
\item The using of sine function in $E_2$ and $E_3$ is a random choice
and bears no significance. We can still see the projection effect 
for $E_2\ml X$ without regularization (top middle image of Figure \ref{fig:2}), 
while the corresponding result for $E_3\ml X$ (top right) is totally inexplicable. 
One difference between $E_2\ml X$ and $E_3\ml X$ that might
be important is that the former lies on 
a 4-dimensional hyperplane in $\mb R^{19}$, while the latter does not 
lie on any lower dimensional hyperplane in $\mb R^{18}$. 
\item The above examples show clearly that the condition $k<D$ doesn't imply 
regularization is unnecessary.
However, as is mentioned, the criterion based on the size relation between 
$k$ and $D$
is only something for convenience. How about using the original idea:
performing regularization or not according to whether $C_i$ is singular
or not? Experiments (not shown here) show that 
it doesn't help either. 
In fact, for $E_3\ml X$, the majority of the $C_i$'s
are already non-singular, and even if all the singular
$C_i$'s are regularized (with non-singular $C_i$'s unchanged), 
the result still looks as bizarre as the original one.
\end{enumerate}

\section{Conclusion}\label{sec:con}

We have demonstrated that LLE inherently admits some unwanted results
if no regularization is used, even 
for cases in which regularization is supposed to be 
unnecessary in the original algorithm. 
The true merit of regularization is hence not (or at least not merely)
for solving \eqref{eq:lm} stably, at the cost of a small
amount of error. 
On the contrary, by deliberately distorting the local linear relations,
it protects LLE from some bad results. As a consequence, 
we suggest that one uses regularization in any case when applying
LLE. 
Of course, our investigation is far from comprehensive. 
More examples, especially high dimensional real world data, 
should be examined. Moreover, using regularization 
alone is in no way a promise of good results. 

\section*{Acknowledgment}
This work is supported by Ministry of Science 
and Technology of Taiwan under grant number MOST110-2636-M-110-005-. 
The author would also like to thank Chih-Wei Chen
for valuable discussions.

\bibliographystyle{plain}
\bibliography{lle}

\begin{thebibliography}{1}

\bibitem{chang2006robust}
Hong Chang and Dit-Yan Yeung.
\newblock Robust locally linear embedding.
\newblock {\em Pattern recognition}, 39(6):1053--1065, 2006.

\bibitem{chenliu2011}
Jing Chen and Yang Liu.
\newblock Locally linear embedding: a survey.
\newblock {\em Artificial Intelligence Review}, 36(1):29--48, 2011.

\bibitem{de2003supervised}
Dick De~Ridder, Olga Kouropteva, Oleg Okun, Matti Pietik{\"a}inen, and
  Robert~PW Duin.
\newblock Supervised locally linear embedding.
\newblock In {\em Artificial Neural Networks and Neural Information
  Processing—ICANN/ICONIP 2003}, pages 333--341. Springer, 2003.

\bibitem{hlle2003}
David~L. Donoho and Carrie Grimes.
\newblock Hessian eigenmaps: Locally linear embedding techniques for
  high-dimensional data.
\newblock {\em Proceedings of the National Academy of Sciences},
  100(10):5591--5596, 2003.

\bibitem{kouro2005}
Olga Kouropteva, Oleg Okun, and Matti Pietik{\"a}inen.
\newblock Incremental locally linear embedding.
\newblock {\em Pattern recognition}, 38(10):1764--1767, 2005.

\bibitem{lle}
Sam~T Roweis and Lawrence~K Saul.
\newblock Nonlinear dimensionality reduction by locally linear embedding.
\newblock {\em science}, 290(5500):2323--2326, 2000.

\bibitem{wang2015bearing}
Xiang Wang, Yuan Zheng, Zhenzhou Zhao, and Jinping Wang.
\newblock Bearing fault diagnosis based on statistical locally linear
  embedding.
\newblock {\em Sensors}, 15(7):16225--16247, 2015.

\bibitem{wu2018}
Hau-Tieng Wu and Nan Wu.
\newblock Think globally, fit locally under the manifold setup: Asymptotic
  analysis of locally linear embedding.
\newblock {\em The Annals of Statistics}, 46(6B):3805--3837, 2018.

\bibitem{mlle2007}
Zhenyue Zhang and Jing Wang.
\newblock Mlle: Modified locally linear embedding using multiple weights.
\newblock In {\em Advances in neural information processing systems}, pages
  1593--1600, 2007.

\end{thebibliography}

\end{document}